\documentclass[11pt]{amsart}
%%%%%%%%%%%%%%%%%%%%%%%%%%%%%%%%%%%%%%%%%%%%%%%%%%%%%%%%%%%%%%%%%%%%%%%%%%%%%%%%%%%%%%%%%%%%%%%%%%%%%%%%%%%%%%%%%%%%%%%%%%%%%%%%%%%%%%%%%%%%%%%%%%%%%%%%%%%%%%%%%%%%%%%%%%%%%%%%%%%%%%%%%%%%%%%%%%%%%%%%%%%%%%%%%%%%%%%%%%%%%%%%%%%%%%%%%%%%%%%%%%%%%%%%%%%%
\usepackage{amsmath}
\usepackage[active]{srcltx}
\usepackage{t1enc}
\usepackage[latin2]{inputenc}
\usepackage{verbatim}
\usepackage{amsmath,amsfonts,amssymb,amsthm}
\usepackage[mathcal]{eucal}
\usepackage{enumerate}
\usepackage[centertags]{amsmath}
\usepackage{graphics}

\setcounter{MaxMatrixCols}{10}
%TCIDATA{OutputFilter=Latex.dll}
%TCIDATA{Version=5.50.0.2890}
%TCIDATA{<META NAME="SaveForMode" CONTENT="1">}
%TCIDATA{BibliographyScheme=Manual}
%TCIDATA{Created=Sat Apr 17 14:03:56 2010}
%TCIDATA{LastRevised=Wednesday, May 04, 2016 12:26:37}
%TCIDATA{<META NAME="GraphicsSave" CONTENT="32">}

\newtheorem{theorem}{Theorem}

\newtheorem*{rodin}{Theorem R}
\newtheorem*{rodin2}{Theorem R2}
\newtheorem*{karagula}{Theorem K}
\newtheorem*{schipp}{Theorem Sch}
\newtheorem*{ggk1}{Theorem GGK1}
\newtheorem*{ggk2}{Theorem GGK2}
\newtheorem*{ggk3}{Theorem GGK3}
\newtheorem*{gog}{Theorem GG}

\begin{document}
\author{Ushangi Goginava}
\title[ Almost Everywhere Summability]{ Almost Everywhere Strong Summability
of two-dimensional Walsh-Fourier Series}
\address{U. Goginava, Department of Mathematics, Faculty of Exact and
Natural Sciences, Ivane Javakhishvili Tbilisi State University,
Chavcha\-vadze str. 1, Tbilisi 0128, Georgia}
\email{zazagoginava@gmail.com}
\date{}
\maketitle

\begin{abstract}
A BMO-estimation of two-dimensional Walsh-Fourier series is proved from
which an almost everywhere exponential summability of quadratic partial sums
of double Walsh-Fourier series is derived.
\end{abstract}

\medskip

\footnotetext{%
2010 Mathematics Subject Classification: 42C10
\par
Key words and phrases: two-dimensional Walsh system, strong Marcinkiewicz
means, a. e. convergence.
\par
Research was supported by project Shota Rustaveli National Science
Foundation grant DI/9/5-100/13 (Function spaces, weighted inequalities for
integral operators and problems of summability of Fourier series)}

\section{ Introduction}

We shall denote the set of all non-negative integers by $\mathbb{N}$ , the
set of all integers by$\,\,\mathbb{Z}$ and the set of dyadic rational
numbers in the unit interval $\mathbb{I}:=[0,1)$ by $\mathbb{Q}$. In
particular, each element of $\mathbb{Q}$ has the form $\frac{p}{2^{n}}$ for
some $p,n\in \mathbb{N},\,\,\,0\leq p<2^{n}$. Set $I_{N}:=[0,2^{-N}),$ $%
I_{N}\left( x\right) :=x\oplus I_{N}$, where $\oplus $ is the dyadic
addition (see \cite{SWS}).

Let $r_{0}\left( x\right) $ be the function defined by 
\begin{equation*}
r_{0}\left( x\right) =\left\{ 
\begin{array}{c}
1,\mbox{ if }x\in [0,1/2) \\ 
-1,\mbox{ if }x\in [1/2,1)%
\end{array}
\right. ,\,\,\,\,\,\,\,\,\,\,\,r_{0}\left( x+1\right) =r_{0}\left( x\right) .
\end{equation*}
The Rademacher system is defined by 
\begin{equation*}
r_{n}\left( x\right) =r_{0}\left( 2^{n}x\right) ,\,\,\,\,\,\,\,n\geq 1.
\end{equation*}

Let $w_{0},w_{1},...\,\,$represent the Walsh functions, i.e. $w_{0}\left(
x\right) =1\,\,$and $\,$if$\,\,k=2^{n_{1}}+\cdots +2^{n_{s}}\,$is a positive
integer with $n_{1}>n_{2}>\cdots >n_{s}\,\,$then 
\begin{equation*}
w_{k}\left( x\right) =r_{n_{1}}\left( x\right) \cdots r_{n_{s}}\left(
x\right) .
\end{equation*}

The Walsh-Dirichlet kernel is defined by 
\begin{equation*}
D_{n}\left( x\right) =\sum\limits_{k=0}^{n-1}w_{k}\left( x\right) ,n\in 
\mathbb{N}.
\end{equation*}

Given $x\in \mathbb{I}$, the expansion

\begin{equation}
x=\sum\limits_{k=0}^{\infty }x_{k}2^{-(k+1)},  \label{rep}
\end{equation}%
where each $x_{k}=0$ or $1$, will be called a dyadic expansion of $x.$ If $%
x\in \mathbb{I}\backslash \mathbb{Q}\mathbf{\,}$, then (\ref{rep}) is
uniquely determined. For the dyadic expansion $x\in \mathbb{Q}$\textbf{\ }we
choose the one for which $\lim\limits_{k\rightarrow \infty }x_{k}=0$.

The dyadic sum of $x,y\in \mathbb{I}$ in terms of the dyadic expansion of $x$
and $y$ is defined by 
\begin{equation*}
x\oplus y=\sum\limits_{k=0}^{\infty }\left\vert x_{k}-y_{k}\right\vert
2^{-(k+1)}.
\end{equation*}

We consider the double system $\left\{ w_{n}(x)\times w_{m}(y):\,n,m\in 
\mathbb{N}\right\} $ on the unit square $\mathbb{I}^{2}=\left[ 0,1\right)
\times \left[ 0,1\right) .$The notiation $a\lesssim b$ in the whole paper
stands for $a\leq c\cdot b$, where $c$ is an absolute constant.

The norm (or quasinorm) of the space $L_{p}\left( \mathbb{I}^{2}\right) $ is
defined by 
\begin{equation*}
\left\Vert f\right\Vert _{p}:=\left( \int\limits_{\mathbb{I}^{2}}\left\vert
f\left( x,y\right) \right\vert ^{p}dxdy\right) ^{1/p}\,\,\,\,\left(
0<p<+\infty \right) .
\end{equation*}

If $f\in L_{1}\left( \mathbb{I}^{2}\right) ,$ then 
\begin{equation*}
\hat{f}\left( n,m\right) =\int\limits_{I^{2}}f\left( x,y\right)
w_{n}(x)w_{m}(y)dxdy
\end{equation*}%
is the $\left( n,m\right) $-th Fourier coefficient of $f.$

The rectangular partial sums of double Fourier series with respect to the
Walsh system are defined by 
\begin{equation*}
S_{M,N}\left( x,y;f\right) =\sum\limits_{m=0}^{M-1}\sum\limits_{n=0}^{N-1}%
\hat{f}\left( m,n\right) w_{m}(x)w_{n}(y).
\end{equation*}

Denote 
\begin{equation*}
S_{n}^{\left( 1\right) }\left( x,y;f\right) :=\sum\limits_{l=0}^{n-1}%
\widehat{f}\left( l,y\right) w_{l}\left( x\right) ,
\end{equation*}%
\begin{equation*}
S_{m}^{\left( 2\right) }\left( x,y;f\right) :=\sum\limits_{r=0}^{m-1}%
\widehat{f}\left( x,r\right) w_{r}\left( y\right) ,
\end{equation*}%
where 
\begin{equation*}
\widehat{f}\left( l,y\right) =\int\limits_{\mathbb{I}}f\left( x,y\right)
w_{l}\left( x\right) dx
\end{equation*}%
and%
\begin{equation*}
\widehat{f}\left( x,r\right) =\int\limits_{\mathbb{I}}f\left( x,y\right)
w_{r}\left( y\right) dy
\end{equation*}

Recall the definition of $BMO\left[ \mathbb{I}\right] $ space. It is the
Banach space of functions $f\in L_{1}\left( \mathbb{I}\right) $ with the norm%
\begin{equation*}
\left\Vert f\right\Vert _{BMO}:=\sup\limits_{I}\left( \frac{1}{\left\vert
I\right\vert }\int\limits_{I}\left\vert f-f_{I}\right\vert ^{2}\right)
^{1/2}+\left\vert \int\limits_{\mathbb{I}}f\right\vert
\end{equation*}%
and the supremum is taken over all dyadic intervals $I\subset \mathbb{I}$.
Let $\xi :=\left\{ \xi _{n}:n=0,1,2,...\right\} $ be an arbitrary sequence
of numbers. Taking%
\begin{equation*}
\delta _{k}^{n}:=\left[ \frac{k}{2^{n}},\frac{k+1}{2^{n}}\right) ,
\end{equation*}%
we define%
\begin{equation*}
BMO\left[ \xi _{n}\right] :=\sup\limits_{0\leq n<\infty }\left\Vert
\sum\limits_{k=0}^{2^{n}-1}\xi _{k}\mathbb{I}_{\delta _{k}^{n}}\left(
t\right) \right\Vert _{BMO},
\end{equation*}%
where $\mathbb{I}_{E}$ is the characteristic function of $E\subset \mathbb{I}
$.

Set%
\begin{equation*}
F:=\left\{ J:=\left[ j2^{m},\left( j+1\right) 2^{m}\right) \cap \mathbb{N}%
,j,m\in \mathbb{N}\right\} .
\end{equation*}%
Then $F$ is the collection of integer dyadic intervals. The number of
elements in $J\in F$ will be denoted by $\left\vert J\right\vert $. The mean
value of the sequence $\xi :=\left\{ \xi _{n}:n=0,1,2,...\right\} $ with
respect to $J$ is defined by%
\begin{equation*}
\xi ^{J}:=\frac{1}{\left\vert J\right\vert }\sum\limits_{l\in J}\xi _{l}.
\end{equation*}%
Then it is easy to see that%
\begin{equation*}
BMO\left[ \xi _{n}\right] =\sup\limits_{J\in F}\left( \frac{1}{\left\vert
J\right\vert }\sum\limits_{k\in J}\left\vert \xi _{k}-\xi ^{J}\right\vert
^{2}\right) ^{1/2}.
\end{equation*}

We denote by $L\left( \log L\right) ^{\alpha }\left( \mathbb{I}^{2}\right) $
the class of measurable functions $f$, with $\int\limits_{\mathbb{I}%
^{2}}|f|\left( \log ^{+}|f|\right) ^{\alpha }<\infty ,$where $\log ^{+}u:=%
\mathbb{I}_{(1,\infty )}\left( u\right) \log u$.

Denote by $S_{n}^{T}(x,f)$ the partial sums of the trigonometric Fourier
series of $f$ and let 
\begin{equation*}
\sigma _{n}^{T}(x,f)=\frac{1}{n+1}\sum_{k=0}^{n}S_{k}^{T}(x,f)
\end{equation*}%
be the $(C,1)$ means. Fejér \cite{Fe} proved that $\sigma _{n}^{T}(f)$
converges to $f$ uniformly for any $2\pi $-periodic continuous function.
Lebesgue in \cite{Le} established almost everywhere convergence of $(C,1)$
means if $f\in L_{1}(\mathbb{T}),\mathbb{T}:=[-\pi ,\pi )$. The strong
summability problem, i.e. the convergence of the strong means 
\begin{equation}
\frac{1}{n+1}\sum\limits_{k=0}^{n}\left\vert S_{k}^{T}\left( x,f\right)
-f\left( x\right) \right\vert ^{p},\quad x\in \mathbb{T},\quad p>0,
\label{Hp}
\end{equation}%
was first considered by Hardy and Littlewood in \cite{H-L}. They showed that
for any $f\in L_{r}(\mathbb{T})~\left( 1<r<\infty \right) $ the strong means
tend to $0$ a.e., if $n\rightarrow \infty $. The Fourier series of $f\in
L_{1}(\mathbb{T})$ is said to be $\left( H,p\right) $-summable at $x\in T$,
if the values (\ref{Hp}) converge to $0$ as $n\rightarrow \infty $. The $%
\left( H,p\right) $-summability problem in $L_{1}(\mathbb{T})$ has been
investigated by Marcinkiewicz \cite{Ma} for $p=2$, and later by Zygmund \cite%
{Zy2} for the general case $1\leq p<\infty $. Oskolkov in \cite{Os} proved
the following: Let $f\in L_{1}(\mathbb{T})$ and let $\Phi $ be a continuous
positive convex function on $[0,+\infty )$ with $\Phi \left( 0\right) =0$
and 
\begin{equation}
\ln \Phi \left( t\right) =O\left( t/\ln \ln t\right) \text{ \ \ \ }\left(
t\rightarrow \infty \right) .  \label{a1}
\end{equation}%
Then for almost all $x$%
\begin{equation}
\lim\limits_{n\rightarrow \infty }\frac{1}{n+1}\sum\limits_{k=0}^{n}\Phi
\left( \left\vert S_{k}^{T}\left( x,f\right) -f\left( x\right) \right\vert
\right) =0.  \label{osk}
\end{equation}

It was noted in \cite{Os} that Totik announced the conjecture that (\ref{osk}%
) holds almost everywhere for any $f\in L_{1}(\mathbb{T})$, provided 
\begin{equation}
\ln \Phi \left( t\right) =O\left( t\right) \quad \left( t\rightarrow \infty
\right) .  \label{a2}
\end{equation}%
In \cite{Ro} Rodin proved

\begin{rodin2}
Let $f\in L_{1}(\mathbb{T})$. Then for any $A>0$ 
\begin{equation*}
\lim\limits_{n\rightarrow \infty }\frac{1}{n+1}\sum\limits_{k=0}^{n}\left(
\exp \left( A\left\vert S_{k}^{T}\left( x,f\right) -f\left( x\right)
\right\vert \right) -1\right) =0
\end{equation*}%
for a. e. $x\in \mathbb{T}$.
\end{rodin2}

Karagulyan \cite{Ka} proved that the following is true.

\begin{karagula}
Suppose that a continuous increasing function $\Phi :[0,\infty )\rightarrow
\lbrack 0,\infty ),\Phi \left( 0\right) =0$, satisfies the condition%
\begin{equation*}
\limsup_{t\rightarrow +\infty }\frac{\log \Phi \left( t\right) }{t}=\infty .
\end{equation*}%
Then there exists a function $f\in L_{1}(\mathbb{T})$ for which the relation%
\begin{equation*}
\limsup_{n\rightarrow \infty }\frac{1}{n+1}\sum\limits_{k=0}^{n}\Phi \left(
\left\vert S_{k}^{T}\left( x,f\right) \right\vert \right) =\infty
\end{equation*}%
holds everywhere on $\mathbb{T}$.
\end{karagula}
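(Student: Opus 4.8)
The plan is to build a counterexample by a Kolmogorov-type resonance, exploiting that the hypothesis forces $\Phi$ to grow faster than every exponential along a subsequence. First I would record the reduction coming from the hypothesis: since $\Phi$ is increasing with $\Phi(0)=0$ and $\limsup_{t\to\infty}\log\Phi(t)/t=\infty$, I can fix an increasing sequence $t_j\to\infty$ with $\log\Phi(t_j)\ge c_j t_j$ and $c_j\to\infty$. The key observation is that the average $\frac{1}{n+1}\sum_{k=0}^n\Phi(|S_k^T(x,f)|)$ dominates each single term $\frac{1}{n+1}\Phi(|S_n^T(x,f)|)$; hence it suffices to produce, for every $x$, infinitely many indices $n=n_j(x)$ at which one partial sum is as large as $t_j$ while $n_j(x)\le N_j$ with $\Phi(t_j)/N_j\to\infty$. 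Because $\Phi$ is super-exponential along $t_j$, this last requirement is compatible with $N_j$ as large as $e^{ct_j}$, which is precisely the order one must pay for an everywhere-large partial sum.

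Second, I would isolate the building block. I would invoke (or reconstruct) the Kolmogorov resonance polynomials: for each target height $t$ there is a trigonometric polynomial $P$ with $\|P\|_1\lesssim 1$ and $\deg P=N\approx e^{ct}$ such that $\max_{0\le n\le N}|S_n^T(x,P)|\ge t$ for every $x\in\mathbb{T}$. This is exactly the mechanism behind Kolmogorov's everywhere divergent $L_1$ Fourier series, where the Lebesgue-constant growth $\log N$ forces partial sums of size $\sim\log N$ at every point; solving $t\sim\log N$ gives $N\approx e^{ct}$. Applying this with $t=t_j$ yields polynomials $P_j$ of degree $N_j\approx e^{ct_j}$ whose partial sums reach height $t_j$ everywhere.

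Third, I would assemble $f=\sum_j\gamma_jP_j$, placing the spectra of the $P_j$ in rapidly separated, pairwise disjoint frequency blocks (so $\operatorname{spec}P_{j+1}$ lies far above $\operatorname{spec}P_j$) and choosing summable weights $\gamma_j$. Lacunary spectral separation guarantees that for $n$ inside the $j$-th block one has $S_n^T(x,f)=T_{j-1}(x)+\gamma_jS_{n'}^T(x,P_j)$, where $T_{j-1}$ is the fixed finished tail of the lower blocks and $n'$ is the order within block $j$; choosing the resonance index where $|S_{n'}^T(x,P_j)|\ge t_j$ retains a partial sum of $f$ of size $\gtrsim\gamma_jt_j$ once the tail is absorbed. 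With $s_j:=\gamma_jt_j$ I would balance the three parameters so that $\sum_j\gamma_j<\infty$ (giving $f\in L_1$) while $\Phi(s_j)/N_j\to\infty$; using $\limsup\log\Phi(t)/t=\infty$ I can first pick $s_j$ with $\log\Phi(s_j)/s_j$ arbitrarily large, then set $t_j$ and $\gamma_j$ accordingly, forcing $\Phi(s_j)\gg N_j\approx e^{ct_j}$. Evaluating the strong mean at $n=n_j(x)\le N_j$ then gives $\frac{1}{n_j+1}\sum_{k\le n_j}\Phi(|S_k^T(x,f)|)\ge\Phi(s_j)/N_j\to\infty$ at every $x$, which is the claim.

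The main obstacle is the bookkeeping in the third step, namely controlling the tail $T_{j-1}$. The Kolmogorov polynomials have small $L_1$ norm but may be large in sup norm, so $T_{j-1}(x)$ can be big on small exceptional sets and threaten to cancel the block contribution; handling this requires driving $\gamma_jt_j$ to grow strictly faster than any partial sum of the finished tail at every point, or replicating Kolmogorov's original everywhere-divergence coordination of the resonance index with the height so that the large partial sum survives. Making the height $s_j$, the degree $N_j$, and the weight $\gamma_j$ simultaneously satisfy $\sum_j\gamma_j<\infty$ and $\Phi(s_j)/N_j\to\infty$ is the delicate point, and it is exactly here that the super-exponential hypothesis is used in full strength: under only an exponential bound $\log\Phi(t)=O(t)$ the quotient $\Phi(s_j)/N_j$ stays bounded, matching Rodin's positive result and confirming the sharpness of the statement.
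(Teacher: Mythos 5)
This statement is quoted in the paper as Theorem K from Karagulyan \cite{Ka}; the paper gives no proof of it, so there is no internal argument to compare yours against. That said, your sketch follows what is in substance the known proof from \cite{Ka}: a quantitative Kolmogorov-type resonance lemma (an $L_1$-normalized trigonometric polynomial of degree roughly $e^{ct}$ whose partial sums reach height $t$ at \emph{every} point of $\mathbb{T}$), lacunary spectral blocks, and heights $s_j$ chosen along the sequence where $\log \Phi (s_j)/s_j\rightarrow \infty $ so that $\Phi (s_j)$ beats the block degree $N_j\approx e^{ct_j}$; your parameter balancing ($\gamma _j$ summable, $s_j=\gamma _jt_j$, $\Phi (s_j)/N_j\rightarrow \infty $) is consistent. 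Two caveats. The entire weight of the argument rests on the resonance lemma in its everywhere form with the degree bound $e^{ct}$ --- this is a genuine theorem, not a routine consequence of the Lebesgue-constant heuristic you give, and it must be proved or cited precisely rather than ``invoked or reconstructed.'' The tail issue you flag is in fact the easy part: $T_{j-1}$ is a fixed trigonometric polynomial, hence bounded in sup norm, so one simply chooses the $j$-th height inductively, after block $j-1$ is fixed, large enough that $\gamma _jt_j$ dominates $\left\Vert T_{j-1}\right\Vert _{\infty }$ (the $\limsup $ hypothesis always allows going further out along the sequence); stating this as an explicit induction would close that step.
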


For quadratic partial sums of two-dimensional trigonometric Fourier series
Marcinkiewicz \cite{Ma2} has proved, that if $f\in L\log L\left( \mathbb{T}%
^{2}\right) $,$\mathbb{T}:=[-\pi ,\pi )^{2}$, then%
\begin{equation*}
\lim\limits_{n\rightarrow \infty }\frac{1}{n+1}\sum\limits_{k=0}^{n}\left(
S_{kk}^{T}\left( x,y,f\right) -f\left( x,y\right) \right) =0
\end{equation*}%
for a. e. $\left( x,y\right) \in \mathbb{T}^{2}$. ~Zhizhiashvili \cite{Zh}
improved this result showing that class $L\log L\left( \mathbb{T}^{2}\right) 
$ can be replaced by $L_{1}\left( \mathbb{T}^{2}\right) $.

From a result of ~Konyagin \cite{Kon} it follows that for every $\varepsilon
>0$ there exists a function $f\in L\log ^{1-\varepsilon }\left( \mathbb{T}%
^{2}\right) $ such that 
\begin{equation}
\lim\limits_{n\rightarrow \infty }\frac{1}{n+1}\sum\limits_{k=0}^{n}\left%
\vert S_{kk}^{T}\left( x,y,f\right) -f\left( x,y\right) \right\vert \neq 0%
\text{ \ \ for a. e. }\left( x,y\right) \in \mathbb{T}^{2}.  \label{str}
\end{equation}

\bigskip

These results show that in the one dimensional case we have the same maximal
spaces for $(C,1)$ summability and for $(C,1)$ strong summability. That is,
in both cases we have $L_{1}\left( \mathbb{T}\right) $. But, the situation
changes as we step further to the case of two dimensional functions. In
other words, the spaces of functions with almost everywhere summable
Marcinkiewicz and strong Marcinkiewicz means are different.

In \cite{CA2} a BMO-estimation of two-dimensional trigonometric Fourier
series is proved from which an almost everywhere exponential summability of
quadratic partial sums of double Fourier series is derived.

The results on strong summation and approximation of trigonometric Fourier
series have been extended for several other orthogonal systems. For
instance, concerning the Walsh system see Schipp \cite{Sch1,Sch2,Sch3},
Fridli and Schipp \cite{FS,FS2}, Leindler \cite{Le,Le1,Le2,Le3,Le4}, Totik 
\cite{To1,To2,To3}, Rodin \cite{Ro1}, Weisz \cite{We,We2}, Gabisonia \cite%
{Ga}.

The problems of summability of cubical partial sums of multiple Fourier
series have been investigated by Gogoladze \cite{Gog1,Gog2,Gog3}, Wang \cite%
{Wa}, Zhag \cite{ZhHe}, Glukhov \cite{Gl}, Goginava \cite{Gogi}, G\'at,
Goginava, Tkebuchava \cite{GGT}, Goginava, Gogoladze \cite{GG} .

For Walsh system Rodin \cite{rodin} (see also Schipp \cite{Sch}) proved that
the following is true.

\begin{rodin}[Rodin]
If $\Phi (t):[0,\infty )\rightarrow \lbrack 0,\infty )$, $\Phi (0)=0$, is an
increasing continuous function satisfying 
\begin{equation}
\limsup_{t\rightarrow \infty }\frac{\log \Phi (t)}{t}<\infty ,  \label{rod}
\end{equation}%
then the partial sums of Walsh-Fourier series of any function $f\in
L^{1}\left( \mathbb{I}\right) $ satisfy the condition%
\begin{equation*}
\lim\limits_{n\rightarrow \infty }\frac{1}{n}\sum\limits_{k=1}^{n}\Phi
\left( \left\vert S_{k}\left( x;f\right) -f\left( x\right) \right\vert
\right) =0
\end{equation*}%
almost everywhere on $\mathbb{I}$.
\end{rodin}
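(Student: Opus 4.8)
The plan is to prove the statement first for the exponential model $\Phi(t)=\exp(At)-1$ and then to pass to the general case by domination. Condition (\ref{rod}) provides constants $A,C>0$ with $\Phi(t)\le C(\exp(At)-1)$ for all $t\ge 0$: the growth hypothesis controls large $t$, while continuity together with $\Phi(0)=0$ handles a bounded range of $t$. Since $\Phi$ is nonnegative, it then suffices to prove, for each fixed $A>0$,
\[
\lim_{n\to\infty}\frac1n\sum_{k=1}^{n}\big(\exp(A|S_k(x;f)-f(x)|)-1\big)=0\qquad\text{a.e. on }\mathbb{I}.
\]

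Next I would partition $\{1,\dots,n\}$ into the integer dyadic blocks $J_j:=[2^{j},2^{j+1})\in F$ together with one incomplete block. Since $\sum_{j\le s}|J_j|\asymp n$ and the largest blocks carry almost all of the mass, a regular-summation argument reduces the claim to showing that the block averages tend to $0$ a.e. as $j\to\infty$. Writing $S^{J}(x;f):=\tfrac1{|J|}\sum_{k\in J}S_k(x;f)$ and using $\exp(A|S_k-f|)\le\exp(A|S^{J_j}-f|)\exp(A|S_k-S^{J_j}|)$, each block average is bounded by
\[
\exp\big(A|S^{J_j}(x;f)-f(x)|\big)\Big(1+\tfrac1{|J_j|}\sum_{k\in J_j}\big(\exp(A|S_k(x;f)-S^{J_j}(x;f)|)-1\big)\Big)-1 .
\]
Thus everything rests on two facts: (a) $S^{J_j}(x;f)\to f(x)$ a.e., and (b) the oscillation sum can be made small.

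Fact (b) is where the sequence space $BMO[\cdot]$ enters. By the discrete John--Nirenberg inequality encoded in the stated formula for $BMO[\xi_n]$, there are absolute constants $c_0,C_0$ with $\tfrac1{|J|}\sum_{k\in J}(\exp(c_0|\xi_k-\xi^J|/BMO[\xi_n])-1)\le C_0$; expanding the exponential shows the average is in fact $\lesssim(A\beta)^2$ once $A\beta\le c_0$, where $\beta$ bounds the block-$BMO$ of $\{S_k(x;f)\}_k$. Hence the heart of the matter is the $BMO$-estimate that the oscillation operator
\[
Vf(x):=\sup_{J\in F}\Big(\tfrac1{|J|}\sum_{k\in J}\big|S_k(x;f)-S^{J}(x;f)\big|^{2}\Big)^{1/2}
\]
is of weak type $(1,1)$, i.e. $|\{x:Vf(x)>\lambda\}|\lesssim\|f\|_1/\lambda$. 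Granting this, a density argument finishes the proof: write $f=P+h$ with $P$ a Walsh polynomial and $\|h\|_1$ so small that $\{x:Vh(x)>\eta\}$ has small measure; since $S_k(x;P)=P(x)$ for all large $k$, the finitely many exceptional indices do not affect the Ces\`aro limit, and on the large set $\{Vh\le\eta\}$ the oscillation term is $\lesssim(A\eta)^2$.

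Fact (a) follows from the Walsh recursion $S_{2^{j}+k'}(x;f)=S_{2^{j}}(x;f)+w_{2^{j}}(x)S_{k'}(x;fw_{2^{j}})$ for $0\le k'<2^{j}$, which exhibits $S^{J_j}(x;f)-S_{2^{j}}(x;f)$ as $w_{2^{j}}(x)$ times a $(C,1)$ mean of $fw_{2^{j}}$; since $S_{2^{j}}(x;f)$ is the average of $f$ over $I_{j}(x)$, and both the dyadic averages and the $(C,1)$ means of an $L_1$ function converge a.e., we obtain $S^{J_j}(x;f)\to f(x)$. The genuine difficulty, on which I expect the bulk of the work to fall, is the weak-type $(1,1)$ bound for $V$: in contrast to the maximal partial-sum operator (which is unbounded on $L_1$), the oscillation operator $V$ must be controlled by combining a Calder\'on--Zygmund decomposition of $f$ with the martingale/kernel structure of the Walsh--Dirichlet kernels $D_k$, so that the off-diagonal contributions are absorbed and only the weak-type dyadic maximal function survives. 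This one-dimensional $BMO$-estimate is precisely the prototype of the two-dimensional inequality announced in the abstract.
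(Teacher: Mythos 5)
The paper does not actually prove Theorem R: it is quoted from Rodin \cite{rodin} and Schipp \cite{Sch}, so your proposal can only be measured against the standard proof and against the machinery the paper does develop (Schipp's operator $V$, Theorem Sch, the estimate (\ref{Sch}), and the block decomposition used in the proof of Theorem \ref{bmo}). Your overall architecture --- reduce to $\Phi(t)=e^{At}-1$, split $\{1,\dots,n\}$ into integer dyadic blocks, bound each block average by a product of $\exp(A|S^{J}-f|)$ and an oscillation average, kill the oscillation with the discrete John--Nirenberg inequality plus a weak type $(1,1)$ bound for the block-oscillation operator, and finish by density --- is exactly the Rodin--Schipp route, and the operator you call $V$ is precisely $x\mapsto BMO[S_k(x;f)]$, the one-dimensional prototype of the quantity estimated in Theorem \ref{bmo}.

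That said, three points. (i) The central weak type $(1,1)$ estimate for your oscillation operator is asserted, not proved, and it carries essentially all of the analytic content of the theorem. In the paper's framework it is obtained by combining Theorem Sch with (\ref{Sch}) and the modulation identity $S_{l+j2^m}(x;f)=S_{j2^m}(x;f)+w_{j2^m}(x)S_l(x;fw_{j2^m})$, which transfers the initial-block estimate (\ref{Sch}) to an arbitrary block $[j2^m,(j+1)2^m)$; you use this identity elsewhere, so the reduction is available to you, but as written the key lemma is a black box. (ii) The pointwise domination $\Phi(t)\le C(e^{At}-1)$ for all $t\ge0$ is false in general: for $\Phi(t)=\sqrt t$ the ratio blows up as $t\to0^{+}$, since $e^{At}-1\sim At$ there. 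Condition (\ref{rod}) only controls large $t$; the standard fix is to split at a level $\delta$, bound the terms with $|S_k-f|<\delta$ by $\Phi(\delta)$, and let $\delta\to0$ after $n\to\infty$. (iii) Your justification of fact (a) does not work as stated: the identity exhibits $S^{J_j}(x;f)-S_{2^j}(x;f)$ as $w_{2^j}(x)$ times the $(C,1)$ mean of the \emph{varying} function $fw_{2^j}$, so a.e.\ convergence of Fej\'er means of a fixed $L_1$ function does not apply (and if it did, the limit would be $f(x)w_{2^j}(x)$, not $0$). What you need is $\sigma_{2^j}(x;fw_{2^j})\to0$ a.e., which requires its own density argument: for a Walsh polynomial the modulation pushes the spectrum above $2^j$, so these means are eventually zero, while the remainder is handled using the nonnegativity of the kernel $K_{2^j}$, which gives a maximal bound uniform in the modulation. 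Items (ii) and (iii) are repairable technical slips, but (i) is a genuine gap: the theorem has been reduced to, rather than derived from, its hardest ingredient.
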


In the paper \cite{JMAA} we established, that, as in trigonometric case \cite%
{Ka}, the bound (\ref{rod}) is sharp for a.e. $\Phi $-summability of
Walsh-Fourier series. Moreover, we prove

\begin{ggk1}
If an increasing function $\Phi (t):[0,\infty )\rightarrow \lbrack 0,\infty
) $ satisfies the condition%
\begin{equation*}
\limsup_{t\rightarrow \infty }\frac{\log \Phi (t)}{t}=\infty ,
\end{equation*}
then there exists a function $f\in L^{1}\left( \mathbb{I}\right) $ such that 
\begin{equation*}
\limsup_{n\rightarrow \infty }\frac{1}{n}\sum\limits_{k=1}^{n}\Phi \left(
\left\vert S_{k}\left( x;f\right) \right\vert \right) =\infty
\end{equation*}%
holds everywhere on $[0,1)$.
\end{ggk1}

Schipp in \cite{Sch} introduced the following operator%
\begin{equation*}
V_{n}\left( x;f\right) :=\left( \frac{1}{2^{n}}\int\limits_{\mathbb{I}%
}\left( \sum\limits_{j=0}^{n-1}2^{j-1}\mathbb{I}_{I_{j}}\left( t\right)
S_{2^{n}}f\left( x\oplus t\oplus e_{j}\right) \right) ^{2}dt\right) ^{1/2}.
\end{equation*}

Let%
\begin{equation*}
V\left( f\right) :=\sup\limits_{n}V_{n}\left( f\right) .
\end{equation*}

The following theorem is proved by Schipp.

\begin{schipp}[\protect\cite{Sch}]
\label{SchD} Let $f\in L_{1}\left( \mathbb{I}\right) $. Then%
\begin{equation*}
\mu \left\{ \left\vert Vf\right\vert >\lambda \right\} \lesssim \frac{%
\left\Vert f\right\Vert _{1}}{\lambda }.
\end{equation*}
\end{schipp}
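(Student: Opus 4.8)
The plan is to prove the weak-type $(1,1)$ estimate by the classical Calder\'on--Zygmund scheme adapted to the dyadic (Walsh) filtration, combining an $L^{2}$ bound for the ``good'' part with a cancellation estimate for the ``bad'' part. Since $Vf=\sup_{n}V_{n}f$, the first ingredient is the $L^{2}$ maximal bound $\left\Vert \sup_{n}V_{n}f\right\Vert _{2}\lesssim \left\Vert f\right\Vert _{2}$.

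To establish it I would fix $n$, write $g:=S_{2^{n}}f$, and expand the square in $V_{n}(x;f)^{2}=2^{-n}\int_{\mathbb{I}}(\cdots)^{2}\,dt$, obtaining a double sum over $j,k$ of terms $2^{j-1}2^{k-1}\mathbb{I}_{I_{j}}(t)\mathbb{I}_{I_{k}}(t)\,g(x\oplus t\oplus e_{j})\,g(x\oplus t\oplus e_{k})$. Using the multiplicativity $w_{m}(x\oplus t\oplus e_{j})=w_{m}(x)w_{m}(t)w_{m}(e_{j})$ with $w_{m}(e_{j})\in\{-1,+1\}$ together with Parseval in the variable $x$, the $x$-integral collapses each cross term to $\sum_{m}|\hat{g}(m)|^{2}w_{m}(e_{j})w_{m}(e_{k})$, which is \emph{independent of} $t$; the remaining $t$-integral gives $\int_{\mathbb{I}}\mathbb{I}_{I_{j}}\mathbb{I}_{I_{k}}=2^{-\max(j,k)}$. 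Since $g=S_{2^{n}}f$ has spectrum in $[0,2^{n})$, the diagonal $j=k$ contributes $2^{-n}\sum_{j<n}2^{j-2}\left\Vert g\right\Vert _{2}^{2}\lesssim\left\Vert g\right\Vert _{2}^{2}$, and the off-diagonal terms are dominated in absolute value by the same geometric sum, so $\left\Vert V_{n}f\right\Vert _{2}\lesssim\left\Vert g\right\Vert _{2}\leq\left\Vert f\right\Vert _{2}$ uniformly in $n$. The passage to the supremum I would handle either through the martingale structure of the dyadic conditional expectations $S_{2^{n}}$ (Doob) or by dominating $\sup_{n}V_{n}$ by a single square function of the same type.

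Next, for fixed $\lambda>0$ I would take the Calder\'on--Zygmund decomposition of $f$ relative to the dyadic maximal operator: put $\Omega:=\{x:\sup_{n}S_{2^{n}}|f|(x)>\lambda\}$, so that $\mu(\Omega)\leq\left\Vert f\right\Vert _{1}/\lambda$, and write $f=g+b$ with $\left\Vert g\right\Vert _{\infty}\lesssim\lambda$, $\left\Vert g\right\Vert _{1}\leq\left\Vert f\right\Vert _{1}$ (hence $\left\Vert g\right\Vert _{2}^{2}\lesssim\lambda\left\Vert f\right\Vert _{1}$), and $b=\sum_{I}b_{I}$, where each $b_{I}$ is supported on a maximal dyadic interval $I\subset\Omega$ and has vanishing integral. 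For the good part, Chebyshev together with the $L^{2}$ bound gives $\mu\{Vg>\lambda/2\}\leq 4\lambda^{-2}\left\Vert Vg\right\Vert _{2}^{2}\lesssim\lambda^{-2}\left\Vert g\right\Vert _{2}^{2}\lesssim\left\Vert f\right\Vert _{1}/\lambda$.

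It remains to treat the bad part off the exceptional set. Since $\mu(\Omega)\lesssim\left\Vert f\right\Vert _{1}/\lambda$, it suffices to show $\int_{\mathbb{I}\setminus\Omega}Vb\lesssim\left\Vert f\right\Vert _{1}$ and apply Chebyshev once more. Here I would use that $S_{2^{n}}b_{I}$ is supported in a neighborhood of $I$ and that $b_{I}$ has mean zero: for $x\notin\Omega$ only those shifts $x\oplus t\oplus e_{j}$ that re-enter $I$ can contribute, and the cancellation from $\int b_{I}=0$, combined with the balance between the weight $2^{j-1}$ and the length $2^{-j}$ of $I_{j}$, should yield a summable-in-$I$ estimate $\sum_{I}\int_{\mathbb{I}\setminus\Omega}Vb_{I}\lesssim\sum_{I}\left\Vert b_{I}\right\Vert _{1}\lesssim\left\Vert f\right\Vert _{1}$. \textbf{The main obstacle} is precisely this last step: unlike a standard singular integral, $V$ is a square function carrying an inner $t$-average and the shifts $e_{j}$ coupled to the indicators $\mathbb{I}_{I_{j}}$, so extracting the cancellation requires tracking exactly which pairs $(t,j)$ map the support of $b_{I}$ back into itself. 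This interplay is the heart of Schipp's construction, and the geometric identity $2^{j-1}\cdot 2^{-j}=\tfrac{1}{2}$ is what keeps the resulting sum convergent.
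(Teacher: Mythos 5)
The paper does not prove Theorem Sch at all: it is imported verbatim from Schipp's paper \cite{Sch}, so there is no in-paper argument to compare against. Judged on its own terms, your proposal follows the standard scheme for weak $(1,1)$ bounds of Walsh-analytic operators ($L^{2}$ boundedness plus a quasi-locality/cancellation estimate, glued by the dyadic Calder\'on--Zygmund decomposition), which is indeed the right general strategy and in spirit the one Schipp implements. Your fixed-$n$ $L^{2}$ computation is correct: the cross term collapses to $\sum_{m}|\hat g(m)|^{2}w_{m}(e_{j})w_{m}(e_{k})$ independently of $t$, the coefficient is $2^{-n}2^{\min(j,k)-2}$, and $\sum_{j,k<n}2^{\min(j,k)}\lesssim 2^{n}$, giving $\Vert V_{n}f\Vert_{2}\lesssim\Vert f\Vert_{2}$ uniformly in $n$.

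However, the two steps that constitute the actual content of the theorem are missing. First, you need $\Vert\sup_{n}V_{n}g\Vert_{2}\lesssim\Vert g\Vert_{2}$, not merely a bound uniform in $n$; the uniform bound does not imply the maximal bound, Doob's inequality does not apply because $V_{n}f$ is not a martingale in $n$ (the integrand involves $S_{2^{n}}f$ at $2^{n}$-dependent shifts, with an $n$-dependent outer normalization), and you do not exhibit the "single dominating square function." In Schipp's argument this is handled by decomposing $S_{2^{n}}f(x\oplus t\oplus e_{j})=S_{2^{j}}f(x)+\sum_{k=j}^{n-1}\Delta_{k}f(x\oplus t\oplus e_{j})$ (using that $S_{2^{j}}f$ is constant on $2^{-j}$-intervals and $t\oplus e_{j}\in I_{j}$) and controlling the resulting martingale-difference square function; some such idea is indispensable. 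Second, the bad-part estimate $\int_{\mathbb{I}\setminus\Omega}Vb_{I}\lesssim\Vert b_{I}\Vert_{1}$, which you yourself flag as "the main obstacle," is exactly the heart of the proof and is not carried out. The needed ingredients are that $S_{2^{n}}b_{I}\equiv 0$ for $2^{-n}\geq|I|$ and is supported on $I$ otherwise, and that for $x\notin I$ the set of $t\in I_{j}$ with $x\oplus t\oplus e_{j}\in I$ has measure at most $|I|$, so the weight $2^{j-1}$ is balanced against the $t$-measure; but turning this into a bound for the $\sup_{n}$ of an $L^{2}(dt)$-norm still requires the same martingale bookkeeping as the first gap. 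As it stands the proposal is a plausible outline with the two genuinely hard steps left open, so it cannot be accepted as a proof.
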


\bigskip Set%
\begin{equation*}
H_{n}^{p}f:=\left( \frac{1}{2^{n}}\sum\limits_{m=0}^{2^{n}-1}\left\vert
S_{mm}f\right\vert ^{p}\right) ^{1/p}
\end{equation*}%
and the maximal strong operator%
\begin{equation*}
H_{\ast }^{p}f:=\sup\limits_{n\in \mathbb{N}}H_{n}^{p}f,\text{ \ \ \ }p>0.
\end{equation*}

In \cite{AM} we studied the a. e. convergence of strong Marcinkiewicz means
of the two-dimensional Walsh-Fourier series. In particular, the following is
true.

\begin{ggk2}
Let $f\in L\log L\left( \mathbb{I}^{2}\right) $ and $p>0$. Then%
\begin{equation*}
\mu \left\{ H_{\ast }^{p}f>\lambda \right\} \lesssim \frac{1}{\lambda }%
\left( 1+\int\limits_{\mathbb{I}^{2}}\left\vert f\right\vert \log
^{+}\left\vert f\right\vert \right) .
\end{equation*}
\end{ggk2}

The weak type $\left( L\log ^{+}L,1\right) $ inequality and the usual
density argument of Marcinkiewicz and Zygmund imply

\begin{ggk3}
Let $f\in L\log L\left( \mathbb{I}^{2}\right) $ and $p>0$. Then 
\begin{equation*}
\left( \frac{1}{n}\sum\limits_{m=0}^{n-1}\left\vert S_{mm}\left(
x,y,f\right) -f\left( x,y\right) \right\vert ^{p}\right) ^{1/p}\rightarrow 0%
\text{ for a.e. }\left( x,y\right) \in \mathbb{I}^{2}\text{ as }n\rightarrow
\infty .
\end{equation*}
\end{ggk3}

We note that from the theorem of Getsadze \cite{Gets} it follows that the
class $L\log L$ in the last theorem is necessary in the context of strong
summability question. That is, it is not possible to give a larger
convergence space (of the form $L\log L\phi (L)$ with $\phi (\infty )=0$)
than $L\log L$. This means a sharp contrast between the one and two
dimensional strong summability.

In \cite{GG} the exponential uniform strong approximation of the
Marcinkiewicz means of the two-dimensional Walsh-Fourier series was studied.
We say that the function $\psi $ belongs to the class $\Psi $ if it increase
on $[0,+\infty )$ and 
\begin{equation*}
\lim\limits_{u\rightarrow 0}\psi \left( u\right) =\psi \left( 0\right) =0.
\end{equation*}

\begin{gog}[\protect\cite{GG}]
a)Let $\varphi \in \Psi $ and let the inequality 
\begin{equation*}
\overline{\lim\limits_{u\rightarrow \infty }}\frac{\varphi \left( u\right) }{%
\sqrt{u}}<\infty
\end{equation*}%
hold. Then for any function $f\in C\left( \mathbb{I}^{2}\right) $ the
equality 
\begin{equation*}
\lim\limits_{n\rightarrow \infty }\left\Vert \frac{1}{n}\sum%
\limits_{l=1}^{n}\left( e^{\varphi \left( \left\vert S_{ll}\left( f\right)
-f\right\vert \right) }-1\right) \right\Vert _{C}=0
\end{equation*}%
is satisfied.

b) For any function $\varphi \in \Psi $ satisfying the condition 
\begin{equation*}
\overline{\lim\limits_{u\rightarrow \infty }}\frac{\varphi \left( u\right) }{%
\sqrt{u}}=\infty
\end{equation*}%
there exists a function $F\in C\left( \mathbb{I}^{2}\right) $ such that 
\begin{equation*}
\overline{\lim\limits_{m\rightarrow \infty }}\frac{1}{m}\sum%
\limits_{l=1}^{m}\left( e^{\varphi \left( \left\vert S_{ll}\left(
0,0,F\right) -F\left( 0,0\right) \right\vert \right) }-1\right) =+\infty .
\end{equation*}
\end{gog}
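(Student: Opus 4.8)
The plan is to prove the two parts by complementary methods: the positive statement a) by a polynomial approximation that reduces everything to a uniform sub-exponential bound for the quadratic partial sums of a uniformly small function, and the sharpness statement b) by a gliding-hump construction at the origin, where the Walsh functions degenerate to $1$.

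For part a), fix $f\in C\left( \mathbb{I}^{2}\right) $ and $\varepsilon >0$, and choose a two-dimensional Walsh polynomial $P$ of order $2^{N}$ with $\left\Vert f-P\right\Vert _{C}<\varepsilon $. Writing $g:=f-P$, one has $S_{ll}P=P$ for every $l\geq 2^{N}$, hence $S_{ll}f-f=S_{ll}g-g$ and $\left\vert S_{ll}f-f\right\vert \leq \left\vert S_{ll}g\right\vert +\varepsilon $ there. Since $\varphi $ is increasing and $\overline{\lim }_{u\rightarrow \infty }\varphi (u)/\sqrt{u}<\infty $ gives $\varphi (u)\leq c_{0}\sqrt{u}+c_{1}$, subadditivity of the square root yields $e^{\varphi \left( \left\vert S_{ll}f-f\right\vert \right) }\leq e^{c_{1}}e^{c_{0}\sqrt{\varepsilon }}e^{c_{0}\sqrt{\left\vert S_{ll}g\right\vert }}$ for $l\geq 2^{N}$. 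The finitely many terms with $l<2^{N}$ are uniformly bounded and vanish after division by $n\rightarrow \infty $. Thus the whole matter reduces to showing that $\frac{1}{n}\sum_{l}e^{c_{0}\sqrt{\left\vert S_{ll}g\right\vert }}$ is, uniformly in $\left( x,y\right) $, close to $1$ once $\left\Vert g\right\Vert _{C}$ is small.

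The heart of the proof is therefore a uniform exponential-square-root estimate for the diagonal sums. The key structural fact is that the two-dimensional Dirichlet kernel factors as $D_{l}\left( x\oplus s\right) D_{l}\left( y\oplus t\right) $, so that $S_{ll}g=S_{l}^{\left( 1\right) }S_{l}^{\left( 2\right) }g$ is the composition of the one-dimensional partial-sum operators in the two variables. One-dimensional strong summability theory (Schipp \cite{Sch}, Theorem Sch, together with a John--Nirenberg argument for the associated $BMO$ sequences) supplies, for $\left\Vert h\right\Vert _{\infty }\leq 1$, a uniform bound of \emph{linear} exponential type, $\frac{1}{n}\sum_{l}e^{a\left\vert S_{l}h\right\vert }\lesssim 1$. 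I would transfer this to the two-dimensional setting through the product structure: the governing square function for $S_{ll}g$ splits, up to the usual error terms, into a product of the two one-dimensional square functions, and the elementary inequality $\sqrt{\left\vert AB\right\vert }\leq \tfrac{1}{2}\left( \left\vert A\right\vert +\left\vert B\right\vert \right) $ converts a bound of type $e^{a\left\vert \cdot \right\vert }$ in each factor into one of type $e^{c\sqrt{\left\vert \cdot \right\vert }}$ for the composed operator. This is precisely why the admissible threshold drops from $u$ in one dimension to $\sqrt{u}$ in two; equivalently, the strong $L^{p}$ means of $S_{ll}g$ grow only \emph{quadratically} in $p$. Expanding $e^{c_{0}\sqrt{\left\vert S_{ll}g\right\vert }}$ in a Taylor series and summing against these quadratic-in-$p$ moment bounds shows that $\limsup_{n}\bigl\Vert \frac{1}{n}\sum_{l}\left( e^{\varphi \left( \left\vert S_{ll}f-f\right\vert \right) }-1\right) \bigr\Vert _{C}$ is dominated by a quantity tending to $0$ with $\varepsilon $, which proves a).

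For part b), given $\varphi $ with $\overline{\lim }_{u\rightarrow \infty }\varphi (u)/\sqrt{u}=\infty $, I would construct $F$ as a uniformly convergent series $\sum_{k}F_{k}$ of Walsh polynomials supported on disjoint, rapidly increasing frequency blocks, so that $\sum_{k}\left\Vert F_{k}\right\Vert _{C}<\infty $ (hence $F\in C\left( \mathbb{I}^{2}\right) $). At the origin every Walsh function equals $1$, so $S_{ll}\left( 0,0;F\right) =\sum_{i,j<l}\widehat{F}\left( i,j\right) $, and one tunes the blocks so that these diagonal sums attain prescribed large values $a_{k}\rightarrow \infty $ on index-intervals $J_{k}$ of positive relative density; the largeness of $D_{l}\left( 0\right) $ is what makes this compatible with $\left\Vert F_{k}\right\Vert _{C}\rightarrow 0$, exactly as in the extremal constructions behind Theorem GGK1 and Theorem K. The hypothesis $\varphi (u)/\sqrt{u}\rightarrow \infty $ provides precisely the slack so that $\frac{1}{\max J_{k}}\sum_{l\in J_{k}}\left( e^{\varphi \left( a_{k}\right) }-1\right) \rightarrow \infty $, forcing $\overline{\lim }_{m}\frac{1}{m}\sum_{l=1}^{m}\left( e^{\varphi \left( \left\vert S_{ll}\left( 0,0,F\right) -F\left( 0,0\right) \right\vert \right) }-1\right) =+\infty $. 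The main obstacle, and the step requiring the sharp bookkeeping, is the positive estimate in a): obtaining the uniform, quadratic-in-$p$ strong bound for the non-tensor operator $S_{ll}$, since the diagonal Marcinkiewicz kernel does not literally split into a single product and the interaction of the two directions must be controlled with the correct constants; once that estimate is in place, both the summation in a) and the extremal matching in b) are comparatively routine.
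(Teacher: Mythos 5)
This statement is Theorem GG, which the paper merely quotes from \cite{GG} (Goginava--Gogoladze, Constr.\ Approx.\ 35 (2012)); the present paper contains no proof of it, so there is no internal argument to compare yours against. On its own terms, your outline follows the expected strategy (and, in broad strokes, the one used in \cite{GG}): reduce part a) to a uniform strong estimate for $S_{ll}$ applied to a uniformly small function, and obtain part b) by a gliding-hump construction evaluated at the origin, where every Walsh function equals $1$. But as written the proposal has two genuine gaps, one structural and one in the bookkeeping.

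The structural gap is that the entire positive part rests on the unproved claim that $\bigl(\frac{1}{n}\sum_{l=1}^{n}\vert S_{ll}g\vert ^{p}\bigr)^{1/p}\lesssim p^{2}\Vert g\Vert _{C}$ uniformly in $n$, equivalently $\frac{1}{n}\sum_{l=1}^{n}e^{c\sqrt{\vert S_{ll}g\vert }}\lesssim 1$. Your heuristic for it --- write $S_{ll}=S_{l}^{(1)}S_{l}^{(2)}$ and use $\sqrt{\vert AB\vert }\leq \frac{1}{2}\left( \vert A\vert +\vert B\vert \right) $ to combine two one-dimensional linear-exponential bounds --- does not go through as stated, because $S_{l}^{(1)}S_{l}^{(2)}g$ is a composition, not a product of two separately controlled factors: the outer operator $S_{l}^{(1)}$ is applied to $S_{l}^{(2)}g$, whose sup norm is not small even when $\Vert g\Vert _{C}$ is (it can grow like $\log l$), so the one-dimensional estimate cannot simply be iterated. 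This is exactly where the real work of \cite{GG} lies, and you defer it as ``sharp bookkeeping.'' The second gap is in the reduction itself: from $\varphi (u)\leq c_{0}\sqrt{u}+c_{1}$ you conclude $e^{\varphi (\vert S_{ll}f-f\vert )}\leq e^{c_{1}}e^{c_{0}\sqrt{\varepsilon }}e^{c_{0}\sqrt{\vert S_{ll}g\vert }}$, but then even the ideal bound $\frac{1}{n}\sum_{l}e^{c_{0}\sqrt{\vert S_{ll}g\vert }}\leq 1+\delta (\varepsilon )$ only yields $\frac{1}{n}\sum_{l}\left( e^{\varphi }-1\right) \leq e^{c_{1}+c_{0}\sqrt{\varepsilon }}\left( 1+\delta \right) -1$, which does not tend to $0$ because of the additive constant $c_{1}$. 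One must instead split according to whether $\vert S_{ll}f-f\vert $ lies below or above a small threshold $\delta $, use $\varphi (\delta )\rightarrow 0$ on the first set and an exponential-moment counting argument on the second. Part b) is a correct plan in outline, but the existence of continuous blocks $F_{k}$ of small sup norm whose diagonal sums at the origin reach the prescribed heights on index sets of positive relative density is precisely the quantitative content encoded by the $\sqrt{u}$ threshold, and it too is asserted rather than constructed.
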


In this paper we study a BMO-estimation for quadratic partial sums of
two-dimensional Walsh-Fourier series from which an almost everywhere
exponential summability of quadratic partial sums of double Walsh-Fourier
series is derived.

\begin{theorem}
\label{bmo}If $f\in L\left( \log L\right) ^{2}\left( \mathbb{I}^{2}\right) $%
, then%
\begin{equation*}
\mu \left\{ \left( x,y\right) \in I^{2}:\text{BMO}\left[ S_{nn}\left(
x,y;f\right) \right] >\lambda \right\} \lesssim \frac{1}{\lambda }\left(
1+\int\limits_{\mathbb{I}^{2}}|f|\left( \log |f|\right) ^{2}\right) .
\end{equation*}
\end{theorem}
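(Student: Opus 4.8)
The plan is to majorize the sequence BMO norm $\mathrm{BMO}\big[S_{nn}(x,y;f)\big]$ pointwise by one-dimensional Schipp operators acting in each variable together with the maximal strong operator $H_{\ast}^{p}$, and then to derive the weak-type bound by iterating Theorem Sch and Theorem GGK2. First I would write the norm over the integer dyadic intervals $J\in F$ in its oscillation form and pass to the symmetrized double sum
\begin{equation*}
\frac{1}{|J|}\sum_{k\in J}\big|S_{kk}f-(Sf)^{J}\big|^{2}\leq \frac{1}{|J|^{2}}\sum_{k,l\in J}\big|S_{kk}f-S_{ll}f\big|^{2},\qquad (Sf)^{J}:=\frac{1}{|J|}\sum_{l\in J}S_{ll}f.
\end{equation*}
Each difference is split along the two variables,
\begin{equation*}
S_{kk}f-S_{ll}f=\big(S_{k,k}f-S_{l,k}f\big)+\big(S_{l,k}f-S_{l,l}f\big),
\end{equation*}
so that the first term varies the first-variable cut-off with the second frozen at $k$ and the second term varies the second-variable cut-off with the first frozen at $l$. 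By the symmetry of the diagonal it suffices to treat one of the two resulting double sums.

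For a block $J=[j2^{p},(j+1)2^{p})$ I would use the Walsh identity $w_{j2^{p}+i}=w_{j2^{p}}w_{i}$ for $0\leq i<2^{p}$ to identify the oscillation of the first term over $k,l\in J$, at a fixed second-variable cut-off, with the local oscillation at scale $2^{p}$ that is governed by the scale-$p$ one-dimensional Schipp operator $V_{p}^{(1)}$ acting in $x$. The genuinely delicate point is that the frozen second-variable cut-off is the running index $k$ itself; I would therefore first replace $S_{k}^{(2)}f$ by its value at the dyadic endpoint of $J$ together with a local correction, arranging that only the dyadic cut-offs $S_{2^{q}}^{(2)}$, which are conditional-expectation averages and hence governed by a weak type $(1,1)$ maximal operator, survive in the frozen slot. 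This is exactly the step that must keep the full Carleson supremum $\sup_{N}S_{N}^{(2)}$ out of the estimate, since that supremum is unbounded on $L^{1}$. The target of this reduction is a pointwise bound of the schematic shape
\begin{equation*}
\mathrm{BMO}\big[S_{nn}f(x,y)\big]\lesssim V^{(1)}\big[\mathcal{M}^{(2)}f\big](x,y)+V^{(2)}\big[\mathcal{M}^{(1)}f\big](x,y)+H_{\ast}^{p}f(x,y),
\end{equation*}
where $V^{(1)},V^{(2)}$ denote Schipp's operator applied in $x$ and in $y$, and $\mathcal{M}^{(1)},\mathcal{M}^{(2)}$ are the dyadic partial-sum maximal operators in the corresponding variable.

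Granting such a majorization, I would assemble the weak-type inequality term by term. The term $H_{\ast}^{p}f$ is estimated directly by Theorem GGK2, which already supplies one logarithm. For a mixed term such as $V^{(1)}[\mathcal{M}^{(2)}f]$ I would freeze $y$ and apply Theorem Sch in $x$ (weak type $(1,1)$), and then control the residual $L^{1}_{x}$-mass in the variable $y$ using the weak bound for the dyadic maximal operator $\mathcal{M}^{(2)}$. Because the BMO functional is the centered quadratic oscillation of $S_{kk}f$ over \emph{every} integer dyadic block $J$, it is strictly stronger than the $p$-th strong mean controlled by Theorem GGK2, and transferring this square oscillation through the second spatial variable by Schipp's weak type $(1,1)$ bound costs one logarithm beyond the $L\log L$ of that theorem, in the Marcinkiewicz--Zygmund fashion in which each passage of a one-dimensional weak type $(1,1)$ operator to the plane costs one logarithm. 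Combining the distributional estimates through the layer-cake identity $\int_{0}^{\infty}\mu\{\,\cdot>\lambda\}\,d\lambda$ then produces exactly the class $L(\log L)^{2}$ of the statement.

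The main obstacle is the diagonal coupling: because the same index $k$ appears simultaneously as the first- and the second-variable cut-off, the two one-dimensional theories cannot be separated by a plain Fubini argument, and a careless disentangling reintroduces a Carleson-type supremum that is false on $L^{1}$. Handling this coupling by the endpoint-localization above, while keeping precise track of how many iterated weak type $(1,1)$ operations enter so that the final exponent is exactly $2$ and not larger, is the technical heart of the proof; the remaining estimates are routine once the majorization by $V^{(1)},V^{(2)}$ and $H_{\ast}^{p}$ is in place.
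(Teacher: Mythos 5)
Your overall architecture (block decomposition of the sequence--BMO norm, the Walsh modulation $w_{j2^{m}+l}=w_{j2^{m}}w_{l}$ inside a block, Schipp's operator for the single-variable oscillations, and a strong-mean estimate for the diagonal piece) matches the paper's. But the step you yourself flag as the technical heart contains a genuine gap, and the paper resolves it in exactly the opposite way from what you propose. After the block decomposition the mixed term is $S_{l,j2^{m}}\left(x,y;fw_{j2^{m}}(y)\right)=S_{l}^{(1)}\left(x,y;S_{j2^{m}}^{(2)}(fw_{j2^{m}})\right)$: the frozen second-variable cut-off is $j2^{m}$, and as $(m,j)$ ranges over all blocks this is an \emph{arbitrary} positive integer (take $m=0$). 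There is no ``endpoint localization'' that leaves only the conditional expectations $S_{2^{q}}^{(2)}$ in the frozen slot: reducing a general $S_{n}^{(2)}$ to dyadic partial sums costs a number of modulated terms comparable to the number of binary digits of $n$, which is precisely the obstruction that makes the Carleson-type maximal operator fail to be weak $(1,1)$. So the full supremum $S_{\ast}^{(2)}f=\sup_{n}|S_{n}^{(2)}f|$ is unavoidable here. The paper does not keep it out of the estimate; it lets it in, bounding $T_{2}\lesssim V_{1}\left(x,y;S_{\ast}^{(2)}f\right)$, and then uses the hypothesis $f\in L(\log L)^{2}$ through the one-dimensional bound of Tateoka, $\int_{\mathbb{I}}S_{\ast}^{(2)}(x,y;f)\,dy\lesssim 1+\int_{\mathbb{I}}|f(x,y)|\left(\log^{+}|f(x,y)|\right)^{2}dy$ for a.e.\ $x$, followed by Fubini and the weak $(1,1)$ bound for Schipp's operator in the other variable. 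That one-dimensional maximal inequality is the sole source of the exponent $2$; avoiding $S_{\ast}^{(2)}$ is neither possible nor necessary.

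Your logarithm accounting confirms the gap: if the claimed pointwise majorization $\mathrm{BMO}[S_{nn}f]\lesssim V^{(1)}[\mathcal{M}^{(2)}f]+V^{(2)}[\mathcal{M}^{(1)}f]+H_{\ast}^{p}f$ with $\mathcal{M}^{(s)}$ the dyadic (martingale) maximal operators were true, every term would be weak type on $L\log L$ (since $\|\mathcal{M}^{(2)}f\|_{1}\lesssim 1+\int|f|\log^{+}|f|$ composes with the weak $(1,1)$ of $V^{(1)}$, and Theorem GGK2 handles $H_{\ast}^{p}f$), and you would have proved the theorem for $f\in L\log L$; your scheme contains no mechanism producing a second logarithm, and the assertion that the square oscillation ``costs one logarithm beyond'' Theorem GGK2 is not backed by any estimate. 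A secondary point: the diagonal piece after modulation is the strong square mean of $S_{ll}$ applied to $fw_{j2^{m}}(x)w_{j2^{m}}(y)$ with a supremum over $(m,j)$, so it is controlled not by $H_{\ast}^{p}f$ and Theorem GGK2 directly but by the modulation-uniform pointwise estimate of G\'at--Goginava--Karagulyan, $\left(2^{-n}\sum_{m<2^{n}}|S_{mm}f|^{2}\right)^{1/2}\lesssim V_{2}(M_{1}f)+V_{1}(M_{2}f)+Mf+V_{2}(A)+V_{1}(A)+\|f\|_{1}$, whose right-hand side does not depend on the block.
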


The following theorem shows that the quadratic sums of two-dimensional
Walsh-Fourier series of a function $f\in L\left( \log L\right) ^{2}\left( 
\mathbb{I}^{2}\right) $ are almost everywhere exponentially summable to the
function $f$. It will be obtained from the previous theorem by using the
John-Nirenberg theorem (see (\cite{CA2}).

\begin{theorem}
\label{a.e.exp} Suppose that $f\in L\left( \log L\right) ^{2}\left( \mathbb{I%
}^{2}\right) $. Then for any $A>0$%
\begin{equation*}
\lim\limits_{m\rightarrow \infty }\frac{1}{m}\sum\limits_{n=1}^{m}\left(
\exp \left( A\left\vert S_{nn}\left( x,y;f\right) -f\left( x,y\right)
\right\vert \right) -1\right) =0
\end{equation*}%
for a. e. $\left( x,y\right) \in \mathbb{I}^{2}$.
\end{theorem}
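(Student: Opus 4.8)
The plan is to derive Theorem~\ref{a.e.exp} from the BMO-estimate in Theorem~\ref{bmo} by invoking the John--Nirenberg inequality, following the scheme used for the trigonometric case in \cite{CA2}. The key observation is that the strong exponential mean in Theorem~\ref{a.e.exp} is, up to the subtraction of the limit function $f(x,y)$, precisely the quantity controlled by the $BMO\left[\xi_n\right]$ functional applied to the sequence $\xi_n=S_{nn}(x,y;f)$. Recall from the equivalent formulation given in the excerpt that
\begin{equation*}
BMO\left[ \xi _{n}\right] =\sup\limits_{J\in F}\left( \frac{1}{\left\vert
J\right\vert }\sum\limits_{k\in J}\left\vert \xi _{k}-\xi ^{J}\right\vert
^{2}\right) ^{1/2},
\end{equation*}
so that controlling $BMO\left[S_{nn}(x,y;f)\right]$ for a.e.\ $(x,y)$ gives uniform control of the mean oscillation of the sequence $k\mapsto S_{kk}(x,y;f)$ over all integer dyadic blocks $J$.

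First I would fix a point $(x,y)$ outside the exceptional null set and set $\Lambda(x,y):=BMO\left[S_{nn}(x,y;f)\right]$, which is finite a.e.\ by Theorem~\ref{bmo} (since the weak-type bound there forces $\Lambda<\infty$ almost everywhere). The John--Nirenberg theorem, in its discrete form for sequences with bounded $BMO\left[\xi_n\right]$, yields an exponential decay estimate of the form
\begin{equation*}
\frac{1}{\left\vert J\right\vert }\#\left\{ k\in J:\left\vert \xi _{k}-\xi
^{J}\right\vert >\beta \right\} \lesssim \exp\left( -\frac{c\beta }{\Lambda
(x,y)}\right)
\end{equation*}
for every block $J\in F$. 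Taking $J=\left[1,m\right]\cap\mathbb{N}$ and integrating the tail estimate against $e^{A\beta}$, one obtains that the block average $\frac{1}{m}\sum_{k=1}^{m}\left(\exp\left(A\left\vert S_{kk}-S_{kk}^{J}\right\vert\right)-1\right)$ stays bounded, indeed is controlled uniformly in $m$ once $A<c/\Lambda(x,y)$. The next step is to replace the moving block-mean $\xi^{J}=\frac{1}{m}\sum_{k=1}^{m}S_{kk}(x,y;f)$ by the target $f(x,y)$: by Theorem~GGK3 (the $p=1$ strong summability of the Marcinkiewicz means, valid since $f\in L\log L\supset L(\log L)^2$) the Ces\`aro average $\frac{1}{m}\sum_{k=1}^{m}\left\vert S_{kk}-f\right\vert\to 0$ a.e., hence $\xi^{J}\to f(x,y)$ and the centering may be shifted from $\xi^{J}$ to $f(x,y)$ at the cost of a factor tending to $1$.

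The passage from boundedness to the limit $0$ is the delicate point, and here the threshold condition on $A$ must be handled. Since Theorem~\ref{a.e.exp} asserts the conclusion for \emph{every} $A>0$, not merely for $A$ small relative to $\Lambda(x,y)$, the argument cannot stop at a uniform bound. The standard device, as in \cite{CA2}, is to exploit that John--Nirenberg gives genuine exponential integrability with any admissible exponent together with the a.e.\ finiteness of $\Lambda(x,y)$: one splits each block sum according to whether $\left\vert S_{kk}-f\right\vert$ exceeds a slowly growing threshold, uses the tail decay on the large part and the a.e.\ convergence $S_{kk}\to f$ (which holds since $f\in L\log L$ guarantees a.e.\ convergence of the quadratic sums) on the small part, and lets $m\to\infty$ before sending the threshold to infinity. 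I expect the main obstacle to be precisely this interchange of limits: making rigorous that the exponential moment of the oscillation, though only finite for $A$ below a point-dependent threshold, nonetheless produces a \emph{vanishing} Ces\`aro mean for \emph{all} $A>0$. This requires combining the John--Nirenberg tail bound with the a.e.\ convergence of $S_{kk}(x,y;f)$ to $f(x,y)$ so that the bulk of the indices $k\le m$ contribute negligibly to the exponential average, leaving only a sparse set whose contribution is damped by the exponential-decay estimate.
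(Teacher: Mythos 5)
Your overall strategy coincides with the paper's: the author derives Theorem~\ref{a.e.exp} from Theorem~\ref{bmo} via the John--Nirenberg theorem exactly as in \cite{CA2}, and gives no further detail. So the question is whether your elaboration of that citation actually closes the argument, and at the decisive point it does not.

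The gap is the passage to arbitrary $A>0$. Fix $(x,y)$ with $\Lambda(x,y):=BMO\left[S_{nn}(x,y;f)\right]<\infty$. John--Nirenberg gives the tail bound $\frac{1}{|J|}\#\{k\in J:|\xi_k-\xi^J|>\beta\}\lesssim \exp(-c\beta/\Lambda(x,y))$, and integrating against $Ae^{A\beta}$ produces a convergent tail integral only when $A<c/\Lambda(x,y)$. Your proposed remedy --- splitting each block at a slowly growing threshold $T$, handling $\beta\le T$ by the Ces\`aro convergence $\frac1m\sum_{k\le m}|S_{kk}-f|\to0$ and $\beta>T$ by the tail decay --- cannot remove this restriction: for $A\ge c/\Lambda(x,y)$ the quantity $\int_T^\infty Ae^{A\beta}e^{-c\beta/\Lambda(x,y)}\,d\beta$ is infinite for every $T$, so no choice of threshold and no information about the bulk of the indices rescues the estimate. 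The mechanism that actually yields every $A>0$ is a density argument that you do not carry out: write $f=P+g$ with $P$ a Walsh polynomial (so $S_{nn}P=P$ for large $n$ and $P$ contributes nothing to the limit) and $g$ small in the $L(\log L)^2$ sense; the \emph{weak-type} nature of Theorem~\ref{bmo}, applied to $g$, makes $BMO\left[S_{nn}(x,y;g)\right]<c/(2A)$ outside a set of arbitrarily small measure, and only then is John--Nirenberg invoked, with exponent $2A$, at points of that good set. In other words, what is exploited is not the a.e.\ finiteness of $\Lambda(x,y)$ for $f$ itself but the fact that the BMO seminorm of the \emph{remainder} can be driven below any prescribed level off a small set. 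Two smaller points: your appeal to a.e.\ convergence of $S_{kk}(x,y;f)$ to $f(x,y)$ for $f\in L\log L$ is unsupported (no such result is quoted in the paper, and it is not needed once the density argument is in place --- Theorem GGK3 with $p=1$ suffices to recenter $\xi^J$ at $f(x,y)$, as you correctly note); and since $BMO[\xi_n]$ is defined over dyadic blocks, you should reduce to $m=2^n$ by the standard comparison $\frac1m\sum_{k\le m}a_k\le 2\cdot 2^{-n-1}\sum_{k<2^{n+1}}a_k$ for nonnegative $a_k$ before taking $J=[0,2^{n+1})$.
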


\section{Proof of Theorem}

Let $f\in L_{1}\left( \mathbb{I}^{2}\right) $. Then the dyadic maximal
function is given by 
\begin{equation*}
Mf\left( x,y\right) :=\sup\limits_{n\in \mathbb{N}}2^{2n}\int\limits_{I_{n}%
\left( x\right) \times I_{n}\left( y\right) }\left\vert f\left( s,t\right)
\right\vert dsdt.\,\,
\end{equation*}

For a two-dimensional integrable function $f$ we need to introduce the
following hybrid maximal functions 
\begin{equation*}
M_{1}f\left( x,y\right) :=\sup\limits_{n\in \mathbb{N}}2^{n}\int%
\limits_{I_{n}\left( x\right) }\left\vert f\left( s,y\right) \right\vert ds,
\end{equation*}%
\begin{equation*}
M_{2}f\left( x,y\right) :=\sup\limits_{n\in \mathbb{N}}2^{n}\int%
\limits_{I_{n}\left( y\right) }\left\vert f\left( x,t\right) \right\vert dt,
\end{equation*}%
\begin{eqnarray}
&&V_{1}\left( x,y,f\right)  \label{V1} \\
&:&=\sup\limits_{n\in \mathbb{N}}\left( \frac{1}{2^{n}}\int\limits_{\mathbb{I%
}}\left( \sum\limits_{j=0}^{n-1}2^{j-1}\mathbb{I}_{I_{j}}\left( t\right)
S_{2^{n}}^{(1)}f\left( x\oplus t\oplus e_{j},y\right) \right) ^{2}dt\right)
^{1/2},  \notag
\end{eqnarray}%
\begin{eqnarray}
&&V_{2}\left( x,y,f\right)  \label{V2} \\
&:&=\sup\limits_{n\in \mathbb{N}}\left( \frac{1}{2^{n}}\int\limits_{\mathbb{I%
}}\left( \sum\limits_{j=0}^{n-1}2^{j-1}\mathbb{I}_{I_{j}}\left( t\right)
S_{2^{n}}^{(2)}f\left( x,y\oplus t\oplus e_{j}\right) \right) ^{2}dt\right)
^{1/2}.  \notag
\end{eqnarray}%
It is well known \cite{Zy2} that for $f\in L\log ^{+}L$ the following
estimation holds%
\begin{equation}
\lambda \mu \left\{ Mf>\lambda \right\} \lesssim 1+\int\limits_{\mathbb{I}%
^{2}}\left\vert f\right\vert \log ^{+}\left\vert f\right\vert  \label{max1}
\end{equation}%
and $\ $for $s=1,2$%
\begin{equation}
\int\limits_{\mathbb{I}^{2}}M_{s}f\lesssim 1+\int\limits_{\mathbb{I}%
^{2}}\left\vert f\right\vert \log ^{+}\left\vert f\right\vert ,  \label{max2}
\end{equation}

\begin{equation}
\mu \left\{ :V_{s}\left( f\right) >\lambda \right\} \lesssim \frac{%
\left\Vert f\right\Vert _{1}}{\lambda },\text{ \ \ }f\in L_{1}\left( \mathbb{%
I}^{2}\right) .  \label{weakV2}
\end{equation}

It is proved in \cite{AM} that the following estimation holds%
\begin{eqnarray}
&&\left( \frac{1}{2^{n}}\sum\limits_{m=0}^{2^{n}-1}\left\vert S_{mm}\left(
x,y,f\right) \right\vert ^{2}\right) ^{1/2}  \label{mainest} \\
&\lesssim &V_{2}\left( x,y,M_{1}f\right) +V_{1}\left( x,y,M_{2}f\right)
+Mf\left( x,y\right)  \notag \\
&&+V_{2}\left( x,y,A\right) +V_{1}\left( x,y,A\right) +\left\Vert
f\right\Vert _{1},  \notag
\end{eqnarray}%
where $A$ is an integrable and nonegative function on $\mathbb{I}^{2}$ of
two variable for which

\begin{equation}
\int\limits_{\mathbb{I}^{2}}A\lesssim 1+\int\limits_{\mathbb{I}%
^{2}}\left\vert f\right\vert \log ^{+}\left\vert f\right\vert ,\text{ \ \ }%
f\in L\log L.  \label{LlogL}
\end{equation}

\begin{proof}[Proof of Theorem \protect\ref{bmo}]
We can wtite%
\begin{eqnarray}
&&BMO\left[ S_{nn}\left( x,y;f\right) \right]  \label{bmoS} \\
&=&\sup\limits_{m,j}\left( \frac{1}{2^{m}}\sum\limits_{l=j2^{m}}^{\left(
j+1\right) 2^{m}-1}\left\vert S_{ll}\left( x,y;f\right) -\frac{1}{2^{m}}%
\sum\limits_{q=j2^{m}}^{\left( j+1\right) 2^{m}-1}S_{qq}\left( x,y;f\right)
\right\vert ^{2}\right) ^{1/2}  \notag \\
&=&\sup\limits_{m,j}\left( \frac{1}{2^{m}}\sum\limits_{l=0}^{2^{m}-1}\left%
\vert S_{l+j2^{m},l+j2^{m}}\left( x,y;f\right) \right. \right.  \notag \\
&&\left. \left. -\frac{1}{2^{m}}\sum%
\limits_{q=0}^{2^{m}-1}S_{q+j2^{m},q+j2^{m}}\left( x,y;f\right) \right\vert
^{2}\right) ^{1/2}.  \notag
\end{eqnarray}%
Since $\left( 0\leq l<2^{m}\right) $%
\begin{eqnarray*}
S_{l+j2^{m},l+j2^{m}}\left( x,y;f\right) &=&S_{j2^{m},j2^{m}}\left(
x,y;f\right) +S_{j2^{m},l}\left( x,y;fw_{j2^{m}}\left( x\right) \right)
w_{j2^{m}}\left( y\right) \\
&&+S_{l,j2^{m}}\left( x,y;fw_{j2^{m}}\left( y\right) \right)
w_{j2^{m}}\left( x\right) \\
&&+S_{l,l}\left( x,y;fw_{j2^{m}}\left( x\right) \otimes w_{j2^{m}}\left(
y\right) \right) w_{j2^{m}}\left( x\right) w_{j2^{m}}\left( y\right)
\end{eqnarray*}%
from (\ref{bmoS}) we obtain%
\begin{eqnarray}
&&BMO\left[ S_{nn}\left( x,y;f\right) \right]  \label{T1-T3} \\
&\leq &\sup\limits_{m,j}\left( \frac{1}{2^{m}}\sum\limits_{l=0}^{2^{m}-1}%
\left\vert S_{l,l}\left( x,y;fw_{j2^{m}}\left( x\right) \otimes
w_{j2^{m}}\left( y\right) \right) \right. \right.  \notag \\
&&\left. \left. -\frac{1}{2^{m}}\sum\limits_{q=0}^{2^{m}-1}S_{q,q}\left(
x,y;fw_{j2^{m}}\left( x\right) \otimes w_{j2^{m}}\left( y\right) \right)
\right\vert ^{2}\right) ^{1/2}  \notag
\end{eqnarray}%
\begin{eqnarray*}
&&+\sup\limits_{m,j}\left( \frac{1}{2^{m}}\sum\limits_{l=0}^{2^{m}-1}\left%
\vert S_{l,j2^{m}}\left( x,y;fw_{j2^{m}}\left( y\right) \right) \right.
\right. \\
&&\left. \left. -\frac{1}{2^{m}}\sum\limits_{q=0}^{2^{m}-1}S_{q,j2^{m}}%
\left( x,y;fw_{j2^{m}}\left( y\right) \right) \right\vert ^{2}\right) ^{1/2}
\end{eqnarray*}%
\begin{eqnarray*}
&&+\sup\limits_{m,j}\left( \frac{1}{2^{m}}\sum\limits_{l=0}^{2^{m}-1}\left%
\vert S_{j2^{m},l}\left( x,y;fw_{j2^{m}}\left( x\right) \right) \right.
\right. \\
&&\left. \left. -\frac{1}{2^{m}}\sum\limits_{q=0}^{2^{m}-1}S_{j2^{m},q}%
\left( x,y;fw_{j2^{m}}\left( x\right) \right) \right\vert ^{2}\right) ^{1/2}
\end{eqnarray*}%
\begin{eqnarray*}
&\leq &2\sup\limits_{m,j}\left( \frac{1}{2^{m}}\sum\limits_{l=0}^{2^{m}-1}%
\left\vert S_{l,l}\left( x,y;fw_{j2^{m}}\left( x\right) \otimes
w_{j2^{m}}\left( y\right) \right) \right\vert ^{2}\right) ^{1/2} \\
&&+2\sup\limits_{m,j}\left( \frac{1}{2^{m}}\sum\limits_{l=0}^{2^{m}-1}\left%
\vert S_{l,j2^{m}}\left( x,y;fw_{j2^{m}}\left( y\right) \right) \right\vert
^{2}\right) ^{1/2} \\
&&+2\sup\limits_{m,j}\left( \frac{1}{2^{m}}\sum\limits_{l=0}^{2^{m}-1}\left%
\vert S_{j2^{m},l}\left( x,y;fw_{j2^{m}}\left( x\right) \right) \right\vert
^{2}\right) ^{1/2} \\
&:&=T_{1}+T_{2}+T_{3}.
\end{eqnarray*}

From (\ref{mainest}) we have%
\begin{eqnarray}
T_{1} &\lesssim &V_{2}\left( x,y,M_{1}f\right) +V_{1}\left(
x,y,M_{2}f\right) +Mf\left( x,y\right)  \label{T1} \\
&&+V_{2}\left( x,y,A\right) +V_{1}\left( x,y,A\right) +\left\Vert
f\right\Vert _{1}.  \notag
\end{eqnarray}

Since%
\begin{equation*}
S_{l,j2^{m}}\left( x,y;fw_{j2^{m}}\right) =S_{l}^{\left( 1\right) }\left(
x,y;S_{j2^{m}}^{\left( 2\right) }w_{j2^{m}}\right)
\end{equation*}%
for $T_{2}$ we can write%
\begin{equation}
T_{2}\lesssim \sup\limits_{m,j}\left( \frac{1}{2^{m}}\sum%
\limits_{l=0}^{2^{m}-1}\left\vert S_{l}^{\left( 1\right) }\left(
x,y;S_{j2^{m}}^{\left( 2\right) }\left( fw_{j2^{m}}\right) \right)
\right\vert ^{2}\right) ^{1/2}.  \label{T2-1}
\end{equation}

Schipp proved the following estimation (see \cite{Sch})%
\begin{equation}
\left( \frac{1}{2^{m}}\sum\limits_{l=0}^{2^{m}-1}\left\vert S_{l}\left(
x;f\right) \right\vert ^{2}\right) ^{1/2}\lesssim V\left( x,f\right) .
\label{Sch}
\end{equation}

Combining (\ref{T2-1}) and (\ref{Sch}) we get%
\begin{equation*}
T_{2}\lesssim \sup\limits_{m,j}V_{1}\left( x,y;\left\vert S_{j2^{m}}^{\left(
2\right) }\left( f\right) \right\vert \right) \lesssim V_{1}\left(
x,y;S_{\ast }^{\left( 2\right) }\left( f\right) \right) ,
\end{equation*}%
where%
\begin{equation*}
S_{\ast }^{\left( 2\right) }\left( x,y;f\right) :=\sup\limits_{n}\left\vert
S_{n}^{\left( 2\right) }\left( x,y;f\right) \right\vert .
\end{equation*}

Let $f\in L\left( \log L\right) ^{2}\left( \mathbb{I}^{2}\right) $. Then $%
f\left( x,\cdot \right) \in L\left( \log L\right) ^{2}\left( \mathbb{I}%
\right) $ for a. e. $x\in \mathbb{I},$and from the well-known theorem  (\cite%
{tateoka}) $S_{\ast }^{\left( 2\right) }\left( x,\cdot ;f\right) \in
L_{1}\left( \mathbb{I}\right) $ for a. e. $x\in \mathbb{I}$. Moreover,%
\begin{equation}
\int\limits_{\mathbb{I}}\left\vert S_{\ast }^{\left( 2\right) }\left(
x,y;f\right) \right\vert dy\lesssim \left( \int\limits_{\mathbb{I}%
}\left\vert f\left( x,y\right) \right\vert \left( \log ^{+}\left\vert
f\left( x,y\right) \right\vert \right) ^{2}dy+1\right)   \label{LlogL2}
\end{equation}%
for a. e. $x\in \mathbb{I}$.

Setting

\begin{equation*}
\Omega :=\left\{ \left( x,y\right) \in \mathbb{I}^{2}:V_{1}\left(
x,y,f\right) >\lambda \right\} .
\end{equation*}%
we can use Fubini's Theorem and Theorem \ref{SchD} to write%
\begin{eqnarray}
\left\vert \Omega \right\vert  &=&\int\limits_{\mathbb{I}^{2}}1_{\Omega
}\left( x,y\right) dxdy  \label{weakV1} \\
&=&\int\limits_{\mathbb{I}}\left( \int\limits_{\mathbb{I}}1_{\Omega }\left(
x,y\right) dx\right) dy  \notag \\
&\lesssim &\frac{1}{\lambda }\int\limits_{\mathbb{I}}\left( \int\limits_{%
\mathbb{I}}\left\vert f\left( x,y\right) \right\vert dx\right) dy.  \notag
\end{eqnarray}%
Consequently, from (\ref{LlogL2}) we obtain%
\begin{eqnarray}
&&\left\vert \left\{ \left( x,y\right) \in \mathbb{I}^{2}:V_{1}\left(
x,y;S_{\ast }^{\left( 2\right) }\left( f\right) \right) >\lambda \right\}
\right\vert   \label{T2} \\
&\lesssim &\frac{1}{\lambda }\int\limits_{\mathbb{I}}\left( \int\limits_{%
\mathbb{I}}\left\vert S_{\ast }^{\left( 2\right) }\left( x,y;f\right)
\right\vert dx\right) dy  \notag \\
&\lesssim &\frac{1}{\lambda }\int\limits_{\mathbb{I}}\left( \int\limits_{%
\mathbb{I}}\left\vert f\left( x,y\right) \right\vert \left( \log
^{+}\left\vert f\left( x,y\right) \right\vert \right) ^{2}dy+1\right) dx 
\notag \\
&=&\frac{c}{\lambda }\left( \int\limits_{\mathbb{I}^{2}}\left( \left\vert
f\left( x,y\right) \right\vert \left( \log ^{+}\left\vert f\left( x,y\right)
\right\vert \right) ^{2}+1\right) dxdy\right) ^{1/2}.  \notag
\end{eqnarray}

Analogously, we can prove that%
\begin{equation}
\left\vert \left\{ T_{3}>\lambda \right\} \right\vert \lesssim \frac{1}{%
\lambda }\left( \int\limits_{\mathbb{I}^{2}}\left( \left\vert f\left(
x,y\right) \right\vert \left( \log ^{+}\left\vert f\left( x,y\right)
\right\vert \right) ^{2}+1\right) dxdy\right) .  \label{T3}
\end{equation}

From (\ref{max1}), (\ref{max2}), (\ref{weakV1}), (\ref{weakV2}), (\ref%
{mainest}), (\ref{LlogL}) and Theorem D we conclude that%
\begin{eqnarray*}
&&\left\vert \left\{ T_{1}>\lambda \right\} \right\vert \\
&\lesssim &\frac{1}{\lambda }\left( \left\Vert M_{1}f\right\Vert
_{1}+\left\Vert M_{2}f\right\Vert _{1}+\left\Vert A\right\Vert
_{1}+\left\Vert f\right\Vert _{1}\right) \\
&\lesssim &\frac{1}{\lambda }\left( 1+\int\limits_{\mathbb{I}^{2}}\left\vert
f\right\vert \log ^{+}\left\vert f\right\vert \right) .
\end{eqnarray*}%
Combining (\ref{T1-T3}), (\ref{T1}), (\ref{T2}) and (\ref{T3}) we conclude
the proof of Theorem \ref{bmo}.
\end{proof}

\end{document}